\newtheorem{obs} [subsection]{Remark}
\newtheorem{exm} [subsection]{Example}
\newtheorem{prop}[subsection]{Proposition}
\newtheorem{teor}[subsection]{Theorem}
\newtheorem{cor} [subsection]{Corollary}
\def\supp{\operatorname{supp}}
\def\Ker{\operatorname{Ker}}
\def\ord{\operatorname{ord}}
\numberwithin{equation}{section}
\theoremstyle{definition}
\numberwithin{equation}{section}
\begin{document}
\selectlanguage{english}
\frenchspacing
\numberwithin{equation}{section}

%%%%%%%%%%%%%

%%%%% To ease editing, for IMPAN journals add:

%%%%%%%%%%%%%%%%

\title{Two semigroup rings associated to a finite set of germs of meromorphic functions}

\author{Mircea Cimpoea\c s}

\maketitle

%%%%%%%%

\begin{abstract}
We fix $z_0\in\mathbb C$ and a field $\mathbb F$ with $\mathbb C\subset \mathbb F \subset \mathcal M_{z_0}:=$ the field of germs of meromorphic functions at $z_0$.
We fix $f_1,\ldots,f_r\in \mathcal M_{z_0}$ and we consider the $\mathbb F$-algebras $S:=\mathbb F[f_1,\ldots,f_r]$ and $\overline S:=\mathbb F[f_1^{\pm 1},\ldots,f_r^{\pm 1}]$.
We present the general properties of the semigroup rings
\begin{align*}
& S^{hol}:=\mathbb F[f^{\mathbf a}:=f_1^{a_1}\cdots f_r^{a_r}: (a_1,\ldots,a_r)\in\mathbb N^r \text{ and }f^{\mathbf a}\text{ is holomorphic at }z_0],\\
& \overline S^{hol}:=\mathbb F[f^{\mathbf a}:=f_1^{a_1}\cdots f_r^{a_r}: (a_1,\ldots,a_r)\in\mathbb Z^r \text{ and }f^{\mathbf a}\text{ is holomorphic at }z_0],
\end{align*}
and we tackle in detail the case in which $\mathbb F=\mathcal M_{<1}$ is the field of meromorphic functions of order $<1$ and 
$f_j$'s are meromorphic functions over $\mathbb C$ of finite order with a finite number of zeros and poles.

\textbf{2010 MSC}: 30D30; 30D20; 16S36.

\textbf{Keywords}: meromorphic functions, entire functions, semigroup rings.
\end{abstract}

\section{Introduction}

Let $z_0\in \mathbb C$ and let $g$ be a holomorphic function at $z_0$, that is $g$ is holomorphic on an open domain $U\subset \mathbb C$ with $z_0\in U$.
Replacing $g(z)$ with $g(z-z_0)$, we can assume that $z_0=0$. 
% It is well known that $g$ is analytic at $0$, i.e.
% \begin{equation}\label{11}
% g(z)=\sum_{n=0}^{\infty}a_n z^n,\;|z|<R,\;\text{ where }R=\frac{1}{\limsup_n \sqrt[n]{|a_n|}} >0.
% \end{equation}
Given two holomorphic functions $g_1$ and $g_2$ at $0$ we say that $g_1\sim g_2$ if there exist an open domain $U\ni z_0$ such that $g_1|_U = g_2|_U$.
$\sim$ is an equivalence relation. A class of equivalence of $\sim$ is called a germ of holomorphic function. 
We denote $\mathcal O_{0}$ the ring of germs of holomorphic functions at $0$. 
It is well known that 
$$\mathcal O_{0} \cong \mathbb C\{z\} = \{\sum_{n=0}^{+\infty} a_nz^n\;:\; \frac{1}{\limsup_n \sqrt[n]{|a_n|}} >0\},$$
the ring of convergent power series, 
% It is well known that $\mathcal O_0 \cong \mathbb C\{z\}$ 
which is an one dimensional local regular ring with the maximal ideal $\mathfrak m = z\mathbb C\{z\}$.

Let $f$ be a meromorphic function at $0$, that is there exists an open domain $U\subset \mathbb C$, $0\in U$, and two 
holomorphic functions $g,h:U\rightarrow \mathbb C$ such that $f(z)=\frac{g(z)}{h(z)}$ for all $z\in U\setminus \{0\}$. 
It is well known that $f$ has a Laurent expansion 
$$f(z)=\sum_{n=\ell}^{+\infty}a_n z^n,\;0<|z|<R, \text{ where }R=\frac{1}{\limsup \sqrt[n]{|a_n|}} >0 \text{ and }\ell\in\mathbb Z.$$
If $a_{\ell}\neq 0$, the number $\ord_{z=0}f(z):=\ell$ is called the \emph{order of zero} of $f$ at $0$. If $\ell\geq 0$, then $f$ is holomorphic
at $0$ and has a zero of order $\ell$ at $0$. If $\ell<0$, then $0$ is a pole of order $-\ell$ of $f$. As in the holomorphic case, we
define $\mathcal M_{0}$ the ring of germs of meromorphic function at $0$. We have that $\mathcal M_0$ is the quotient field of $\mathcal O_0$ and hence
$$\mathcal M_0 \cong Q(\mathbb C\{z\}) = \mathbb C\{z\}[z^{-1}] \cong \frac{\mathbb C\{z\}[t]}{(1-zt)}.$$
In order to simplify the notation, we denote by $f$ a holomorphic (meromorphic) function at $0$ and its germ.

We fix a field $\mathbb F$ such that $\mathbb C \subset \mathbb F \subsetneq \mathcal M_0$ and some germs $f_1,\ldots,f_r\in\mathcal M_0$.
We consider the $\mathbb F$-algebras $S:=\mathbb F[f_1,\ldots,f_r]$ and $\overline S:=\mathbb F[f_1^{\pm 1},\ldots,f_r^{\pm 1}]$.
Our aim is to study the $\mathbb F$-subalgebras 
\begin{align*}
& S^{hol}:=\mathbb F[ f^{\mathbf a}:=f_1^{a_1}\cdots f_r^{a_r}\;:\; \ord_{z=0}f(z) \geq 0,\;\mathbf a=(a_1,\ldots,a_r)\in\mathbb N^r] \subset S. \\
& \overline{S}^{hol}:=\mathbb F[ f^{\mathbf a}:=f_1^{a_1}\cdots f_r^{a_r}\;:\; \ord_{z=0}f(z) \geq 0,\;\mathbf a=(a_1,\ldots,a_r)\in\mathbb Z^r] \subset \overline S.
\end{align*}
In the second section, we present the general properties of $S^{hol}$ and $\overline{S}^{hol}$, using the methods from \cite{mir}.
Theorem $2.3$, Theorem $2.4$ and Theorem $2.5$ are simple generalizations of the main results from \cite{mir}, hence we omit the proofs.

In the third section, we present our main results of the paper. We let $\mathbb F:=\mathcal M_{<1}$ be
the field of meromorphic functions of order $<1$ and we let $f_1,\ldots,f_r$ be some meromorphic functions of finite order with finite number of
zeros and poles. In Proposition $3.1$ we prove that such functions are of the form $R(z)e^{P(z)}$, where $R(z)$ is a rational function and $P(z)$ is a polynomial.
In Theorem $3.3$ we prove that if  $P_1,P_2,\ldots,P_r\in \mathbb C[z]$ are polynomials such that $P_j-P_k$ are non-constant
for all $j\neq k$, then the functions $f_j(z)=e^{P_j(z)}$, $1\leq j\leq r$, are linearly independent over $\mathbb F$.
Moreover, if $d_j:=\deg(P_j)\geq 1$ for $1 \leq j\leq r$ and $d_j\neq d_k$ for all $j\neq k$, then $f_1,\ldots,f_r$ are algebraically independent over $\mathbb F$.
In Corollary $3.4$ we prove similar conclusions, when we replace $f_j$'s with linear combinations $h_j=\sum_{k=1}^r g_{jk}f_k$, $1\leq j\leq r$, where $g_{jk}\in\mathbb F$ 
and the determinant $\det((g_{jk})_{j,k})$ is nonzero. In Corollary $3.5$ we prove that if $\varphi\in\mathbb F$ and $f_j(z)=e^{P_j(z)}$, $1\leq j\leq r$, are as in the hypothesis
of Theorem $3.3$, then $\varphi,f_1,\ldots,f_r$ are linearly (algebraically) independent over $\mathbb C$.
We conclude our paper with Example $3.6$.

\newpage
\section{Preliminaries}

Let $\mathcal M_0$ be the field of germs of meromorphic functions at $0$. Let $\mathbb F$ be a field such that 
$\mathbb C \subset \mathbb F \subsetneq \mathcal M_0$ and let $f_1,\ldots,f_r\in\mathcal M_0$.
Let $S:=\mathbb F[f_1,\ldots,f_r]$ and $\overline S:=\mathbb F[f_1^{\pm 1},\ldots,f_r^{\pm 1}]$. Since $S$ is a domain, we have 
\begin{equation}\label{112}
S\cong \frac{\mathbb F[x_1,\ldots,x_r]}{\mathfrak p}, 
\end{equation}
where $\mathfrak p \subset \mathbb F[x_1,\ldots,x_r]$ is a prime ideal.
Similarly, 
\begin{equation}\label{1112}
\overline S \cong \frac{\mathbb F[x_1^{\pm 1},\ldots,x_r^{\pm 1}]}{\mathfrak q} \cong \frac{\mathbb F[x_1,\ldots,x_r,y_1,\ldots,y_r]}{ \overline{\mathfrak p}}, 
\end{equation}
where $\mathfrak q\subset \mathbb F[x_1^{\pm 1},\ldots,x_r^{\pm 1}]$ is a prime ideal and $\overline{\mathfrak p}\subset \mathbb F[x_1,\ldots,x_r,y_1,\ldots,y_r]$ is a prime ideal such that
$$\mathfrak q \cong \frac{\overline{\mathfrak p}}{(x_1y_1-1,\ldots,x_ry_r-1)}.$$
We consider the $\mathbb F$-subalgebras
\begin{equation}\label{12}
S^{hol}:=\mathbb F[ f^{\mathbf a}:=f_1^{a_1}\cdots f_r^{a_r}\;:\;\mathbf a=(a_1,\ldots,a_r)\in\mathbb N^r\text{ and }\ord_{z=0}f^{\mathbf a}(z)\geq 0] \subset S,
\end{equation}
\begin{equation}\label{122}
\overline{S}^{hol}:=\mathbb F[ f^{\mathbf a}:=f_1^{a_1}\cdots f_r^{a_r}\;:\;\mathbf a=(a_1,\ldots,a_r)\in\mathbb Z^r\text{ and }\ord_{z=0}f^{\mathbf a}(z) \geq 0] \subset \overline S.
\end{equation}
Let $\ell_j:=\ord_{z=0}f_j(z)$, for $1\leq j\leq r$. From \eqref{12} it follows that
\begin{equation}\label{13}
S^{hol}:=\mathbb F[f^{\mathbf a}\;:\; a_1\ell_1+\cdots +a_r\ell_r\geq 0,\;\mathbf a\in\mathbb N^r]. 
\end{equation}
Similarly, from \eqref{122} it follows that
\begin{equation}\label{133}
\overline{S}^{hol}:=\mathbb F[f^{\mathbf a}\;:\; a_1\ell_1+\cdots +a_r\ell_r\geq 0,\;\mathbf a\in\mathbb Z^r]. 
\end{equation}
We consider the semigroups
\begin{equation}\label{14}
H:=\{\mathbf a=(a_1,\ldots,a_r)\;:\;a_1\ell_1+\cdots+a_r\ell_r \geq 0\}\subset \mathbb N^r,
\end{equation}
\begin{equation}\label{144}
\overline H:=\{\mathbf a=(a_1,\ldots,a_r)\;:\;a_1\ell_1+\cdots+a_r\ell_r \geq 0\}\subset \mathbb Z^r,
\end{equation}
and their associated toric ring 
\begin{equation}\label{15}
\mathbb F[H]:=\mathbb F[x^{\mathbf a}=x_1^{a_1}\cdots x_r^{a_r}\;:\;\mathbf a\in H] \subset \mathbb F[x_1,\ldots,x_r].
\end{equation}
\begin{equation}\label{155}
\mathbb F[\overline H]:=\mathbb F[x^{\mathbf a}=x_1^{a_1}\cdots x_r^{a_r}\;:\;\mathbf a\in \overline H] \subset \mathbb F[x_1^{\pm 1},\ldots,x_r^{\pm 1}].
\end{equation}
We consider the semigroup
\begin{equation}
 \widetilde H:=\{(\mathbf b,\mathbf c)\in \mathbb N^r\times \mathbb N^r\;:\;\mathbf b-\mathbf c\in\overline H\}.
\end{equation}
with its associated toric ring $\mathbb F[\widetilde H]$. One can easily check that 
\begin{equation}\label{cocoo}
 \mathbb F[\overline H] \cong \frac{\mathbb F[\widetilde H]}{(x_1y_1-1,\ldots,x_ry_r-1)}.
\end{equation}
From \eqref{112}, \eqref{13} and \eqref{15} it follows that
\begin{equation}\label{17}
S^{hol} \cong \frac{\mathfrak p + \mathbb F[H]}{\mathfrak p} \cong \frac{\mathbb F[H]}{\mathfrak p \cap \mathbb F[H]}.
\end{equation}
From \eqref{1112}, \eqref{133}, \eqref{155} and \eqref{cocoo} it follows that
\begin{equation}\label{177}
\overline S^{hol} \cong \frac{\mathfrak q + \mathbb F[\overline H]}{\mathfrak q} \cong \frac{\overline{\mathfrak p} + \mathbb F[\widetilde H]}{\overline{\mathfrak p}}
\cong \frac{\mathbb F[\widetilde H]}{\overline{\mathfrak p} \cap \mathbb F[\widetilde H]}.
\end{equation}
There are three cases to consider:
\begin{align*}
 &(i)\; \ell_1>0,\ldots, \ell_p>0, \ell_{p+1} = \cdots= \ell_r = 0, \text{ where }p\geq 0 \\
 &(ii)\; \ell_1<0,\ldots,\ell_q<0,\;\ell_{q+1}=\cdots=\ell_r=0,\text{ where }q\geq 1\\
 &(iii)\; \ell_1>0,\ldots,\ell_p>0,\; \ell_{p+1}<0,\ldots,\ell_q<0,\; \ell_{q+1}=\cdots=\ell_r=0,\;1\leq p < q \leq r.
\end{align*}
In the case $(i)$, we have that 
$$\mathbb F[H]=\mathbb F[x_1,\ldots,x_r] \text{ and } \mathbb F[\overline H]=\mathbb F[x_1,\ldots,x_p,x_{p+1}^{\pm 1},\ldots,x_r^{\pm 1}].$$
In the case $(ii)$, we have that 
$$\mathbb F[H]=\mathbb F[x_{q+1},\ldots,x_r] \text{ and }\mathbb F[\overline H]=\mathbb F[x_1^{-1},\ldots,x_q^{-1},x_{q+1}^{\pm 1},\ldots,x_r^{\pm 1}].$$
Assume we are in the case $(iii)$. Let $v_1,\ldots,v_m \in \mathbb F[x_1,\ldots,x_r]$ be the minimal monomial set of generators of the $\mathbb F$-algebra $\mathbb F[H]$.
In \cite[Proposition 1.3(1)]{mir} we proved that $m\geq r$. We consider the natural epimorphism
\begin{equation}\label{19}
\Phi:\mathbb F[t_1,\ldots,t_m] \to \mathbb F[H],\;\Phi(t_j):=v_j,\;1\leq j\leq m. 
\end{equation}
$I_H:=\Ker(\Phi)$ is called the \emph{toric ideal} of $H$, see \cite{villa} for further details. From \eqref{17} and \eqref{19} it follows that
\begin{equation}\label{110}
S^{hol} \cong \frac{\mathbb F[t_1,\ldots,t_m]}{\Phi^{-1}(\mathfrak p \cap \mathbb F[H])}.
\end{equation}
Now, assume that $w_1,\ldots,w_s$ are the minimal monomial generators of the $\mathbb F$-algebra $\mathbb F[\widetilde H]$. We consider the natural epimorphism
\begin{equation}\label{1999}
 \widetilde \Phi:\mathbb F[t_1,\ldots,t_s] \to \mathbb F[\widetilde H],\; \widetilde \Phi(t_j)=w_j,\;1\leq j\leq s.
\end{equation}
The ideal $I_{\widetilde H}:=\Ker(\widetilde \Phi)$ is the toric ideal of $\widetilde H$. From \eqref{177} and \eqref{1999} it follows that
\begin{equation}\label{1100}
\overline S^{hol} \cong \frac{\mathbb F[t_1,\ldots,t_s]}{\widetilde \Phi^{-1}(\overline{\mathfrak p} \cap \mathbb F[\widetilde H])}.
\end{equation}
\begin{obs}\emph{
Let $K/\mathbb Q$ be a finite Galois extension. 
For the character $\chi$ of the Galois group $G:=Gal(K/\mathbb Q)$
on a finite dimensional complex vector space, let $L(s,\chi):=L(s,\chi,K/\mathbb Q)$ be the corresponding Artin L-function (\cite[P.296]{artin2}). 
Artin conjectured that $L(s,\chi)$ is holomorphic in $\mathbb C\setminus \{1\}$ and $s=1$ is a simple pole. Brauer \cite{brauer}
proved that $L(s,\chi)$ is meromorphic in $\mathbb C$, of order $1$.
Let $\chi_1,\chi_2,\ldots,\chi_r$ be the irreducible characters of $G$. % with the dimensions $d_1:=\chi_1(1)$ ,$\ldots, d_r:=\chi_r(1)$.
Let $f_1:=L(s,\chi_1),\ldots,f_r:=L(s,\chi_r)$.}

\emph{
Artin \cite[Satz 5, P. 106]{artin1} proved that $f_1,\ldots,f_r$ are multiplicatively independent.
F.\ Nicolae proved in \cite{lucrare} that $f_1,\ldots,f_r$ are algebraically independent over $\mathbb C$. 
This result was extended in \cite{forum}
to the field $\mathcal M_{<1}$ of meromorphic functions of order $<1$. 
Let $\mathbb F$ be a field such that $\mathbb C\subset \mathbb F\subset \mathcal M_{<1}$. 
We consider $S:=\mathbb F[f_1,\ldots,f_r]$, $\overline S:=\mathbb F[f_1^{\pm 1},\ldots,f_r^{\pm 1}]$,
 $S^{hol}$, $\overline S^{hol}$, $H$ and $\overline H$ as above.
An extensive study of the semigroup rings $\mathbb F[H]\cong S^{hol}$ and $\mathbb F[\overline H]\cong \overline S^{hol}$ was done in \cite{mir}, 
in the frame of Artin L-functions.}
\end{obs}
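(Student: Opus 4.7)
The final block in the excerpt is an \texttt{obs} environment, which the preamble declares to print as \emph{Remark}. It contains no theorem, lemma, proposition, or claim and therefore nothing to prove; its role is purely expository, flagging the Artin $L$-function motivation behind the choice $\mathbb F=\mathcal M_{<1}$ that will drive Section~3. Accordingly, what I would ``plan'' here is a verification that every mathematical assertion in the remark is either a recap of definitions already given in Section~2 or an explicitly attributed external result, together with a check that each attribution is supported by an entry in the bibliography.

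Concretely, I would step through the remark once more. The first sentence defines the Artin $L$-function via a character of $\Gal(K/\mathbb Q)$ on a finite-dimensional complex vector space, with page~$296$ of \cite{artin2} cited. Artin's holomorphy conjecture is stated and not claimed proved. Brauer's theorem that $L(s,\chi)$ is meromorphic on $\mathbb C$ of order $1$ is attributed to \cite{brauer}. Multiplicative independence of $L(s,\chi_1),\ldots,L(s,\chi_r)$ is attributed to Satz~$5$ of \cite{artin1}, algebraic independence over $\mathbb C$ to F.~Nicolae \cite{lucrare}, and its extension over $\mathcal M_{<1}$ to \cite{forum}. Finally, the semigroup rings $S$, $\overline S$, $S^{hol}$, $\overline S^{hol}$, $H$, $\overline H$ are invoked with the notation fixed earlier in the section, and the reference \cite{mir} points to the prior systematic study of $\mathbb F[H]$ and $\mathbb F[\overline H]$ that the present paper generalises.

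Since there is no claim being asserted, there is no main obstacle to flag. The one latent mathematical point is the implicit assumption that $\mathcal M_{<1}$ is a field satisfying $\mathbb C\subsetneq\mathcal M_{<1}\subsetneq\mathcal M_0$; this is standard, because the order of a meromorphic function is subadditive under sums and products and invariant under inversion, so functions of order $<1$ are closed under the field operations, while $e^{z}\in\mathcal M_0\setminus\mathcal M_{<1}$ and any nonconstant polynomial lies in $\mathcal M_{<1}\setminus\mathbb C$. No further argument is required before moving on to Section~3, where the actual theorems of the paper begin.
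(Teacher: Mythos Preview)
Your assessment is correct: the \texttt{obs} block is a Remark, and the paper gives no proof for it (nor could it, since nothing is being claimed beyond attributed results and a recap of Section~2 notation). Your walk-through of the citations and the side observation about $\mathcal M_{<1}$ being a genuine intermediate field are accurate and match what the paper assumes without comment.
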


We recall the several results from \cite{mir}, which hold in our (more general) context.

\begin{teor}(\cite[Proposition 1.3(2), Theorem 1.4, Proposition 2.2]{mir})
 In the case $(iii)$, the following are equivalent:
\begin{enumerate}
 \item[(1)] $I_H=(0)$.
 \item[(2)] $\mathbb F[H]$ is minimally generated by $r$ monomials.
 \item[(3)] $\ell_1>0$, $\ell_2<0,\ldots, \ell_q<0$, $\ell_{q+1}=\cdots=\ell_r=0$, where $q\geq 2$, and $\ell_1|\ell_j$ for $2\leq j\leq q$.
 \item[(4)] $\mathbb F[H]=\mathbb F[x_1,x_1^{m_2}x_2,\ldots,x_1^{m_q}x_q,x_{q+1},\ldots,x_r]$, where $m_j=-\frac{\ell_j}{\ell_1}$, $2\leq j\leq q$.
 \item[(5)] $\mathbb F[\overline H] = \mathbb F[x_1,(x_1^{m_2}x_2)^{\pm 1},\ldots,(x_1^{m_q}x_q)^{\pm 1},x_{q+1}^{\pm 1},\ldots,x_r^{\pm 1}]$, where $m_j=-\frac{\ell_j}{\ell_1}$, $2\leq j\leq q$.
\end{enumerate}
\end{teor}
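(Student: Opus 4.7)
My plan is to split the theorem into a direct structural part $(3)\Leftrightarrow(4)\Leftrightarrow(5)$, a dimension-theoretic equivalence $(1)\Leftrightarrow(2)$, and the numerical core $(2)\Rightarrow(3)$, with $(4)\Rightarrow(2)$ closing the cycle.

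For the structural implications I would verify (4) under the assumption of (3). Setting $m_j:=-\ell_j/\ell_1\in\mathbb N$ for $2\le j\le q$, any $\mathbf a=(a_1,\ldots,a_r)\in H$ decomposes uniquely as $b_1 e_1+\sum_{j=2}^q b_j(m_j e_1+e_j)+\sum_{j=q+1}^r b_j e_j$ by taking $b_j:=a_j$ for $j\ge 2$ and $b_1:=a_1-\sum_{j=2}^q m_j a_j$; nonnegativity of $b_1$ follows because $\ell_j=-m_j\ell_1$ turns $\langle\mathbf a,\ell\rangle\ge 0$ into $b_1\ell_1\ge 0$. This exhibits $\mathbb F[H]$ as a polynomial ring in $r$ variables, giving (4). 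The equivalence $(4)\Leftrightarrow(5)$ comes from inverting the generators other than $x_1$ (for $\Rightarrow$) and from intersecting with $\mathbb F[x_1,\ldots,x_r]$ inside the Laurent ring (for $\Leftarrow$). From (4) the polynomial-ring structure immediately yields both (2) (the $r$ listed generators are minimal) and (1) (the presentation map $\Phi$ of \eqref{19} becomes an isomorphism). For $(1)\Leftrightarrow(2)$, the ring $\mathbb F[H]$ is a domain of Krull dimension $r$ since $\operatorname{Frac}\mathbb F[H]=\mathbb F(x_1,\ldots,x_r)$, so $\Phi$ has $I_H=0$ iff $m=r$; combined with the lower bound $m\ge r$ of \cite[Proposition 1.3(1)]{mir}, this settles both directions.

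The crux, which I expect to be the main obstacle, is $(2)\Rightarrow(3)$, proved by contradiction in two stages. Stage one rules out $p\ge 2$: the $p+(r-q)$ vectors $e_1,\ldots,e_p,e_{q+1},\ldots,e_r$ are indecomposable in $H$, and for each pair $(i,j)$ with $1\le i\le p<j\le q$ the vector $v_{i,j}$ with $\lceil|\ell_j|/\ell_i\rceil$ in position $i$ and $1$ in position $j$ (zeros elsewhere) lies in $H$ and is indecomposable, since any decomposition is forced onto its support $\{i,j\}$ and the $i$-coordinate value is already minimal for the $j$-coordinate to be nonzero. This produces $p(q-p)$ further distinct minimal generators, giving
\[
m \;\ge\; p+(r-q)+p(q-p) \;=\; r+(p-1)(q-p) \;>\; r \quad\text{when } p\ge 2,
\]
contradicting (2). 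Stage two, with $p=1$ established, supposes $\ell_1\nmid\ell_j$ for some $j\in\{2,\ldots,q\}$ and restricts to the two-coordinate slice on $(a_1,a_j)$. The rational cone $\{(a,b)\in\mathbb R_{\ge 0}^2:a\ell_1\ge b|\ell_j|\}$ has primitive ray generators $(1,0)$ and $(|\ell_j|/d,\ell_1/d)$ with $d:=\gcd(\ell_1,|\ell_j|)$, and its $2\times 2$ determinant is $\ell_1/d\ge 2$, so the cone is non-unimodular and its Hilbert basis strictly exceeds two elements. Any extra basis element has support in $\{1,j\}$, so any decomposition of it inside $H$ is forced to stay on those two coordinates; it therefore lifts to an $(r+1)$-st minimal generator of $H$, again contradicting (2). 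Hence $\ell_1\mid\ell_j$ for all $2\le j\le q$, which is (3).

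The hardest sub-step is the two-dimensional Hilbert-basis argument in stage two: one must produce the extra indecomposable element cleanly from the failure of divisibility (via a Euclidean/continued-fraction computation, or equivalently the non-unimodularity of the cone) and verify that its indecomposability persists after lifting into the ambient $r$-dimensional semigroup $H$, which is where the detailed combinatorial analysis of \cite{mir} does the real work.
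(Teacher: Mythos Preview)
The paper does not actually give a proof of this theorem: it is stated as a recollection of \cite[Proposition 1.3(2), Theorem 1.4, Proposition 2.2]{mir}, and the introduction says explicitly that these results ``are simple generalizations of the main results from \cite{mir}, hence we omit the proofs.'' So there is no in-paper argument to compare your proposal against.

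That said, your argument is correct and self-contained. The cycle $(3)\Rightarrow(4)\Rightarrow(2)\Rightarrow(3)$ together with $(1)\Leftrightarrow(2)$ and $(4)\Leftrightarrow(5)$ covers all equivalences. A few small checks are worth spelling out when you write it up. In stage two you should make the counting explicit: with $p=1$ you already have the $r$ irreducibles $e_1,\,v_{1,2},\ldots,v_{1,q},\,e_{q+1},\ldots,e_r$, and the non-unimodular $2$-dimensional slice on $\{1,j\}$ contributes an additional Hilbert-basis element (for instance the second primitive ray $(\,|\ell_j|/d,\ \ell_1/d\,)$ with $d=\gcd(\ell_1,|\ell_j|)$, which differs from both $e_1$ and $v_{1,j}$ precisely because $\ell_1/d\ge 2$); the support argument then shows it remains irreducible in $H$, giving $m\ge r+1$. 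For $(5)\Rightarrow(4)$, the cleanest justification is the identity $\mathbb F[H]=\mathbb F[\overline H]\cap\mathbb F[x_1,\ldots,x_r]$ at the level of monomials, after which the intersection is immediate in the new coordinates $x_1,\,x_1^{m_j}x_j,\,x_k$. Your sketch matches the division of labour indicated by the citation (Proposition~1.3(2) for $m\ge r$ and $(1)\Leftrightarrow(2)$, Theorem~1.4 for the numerical core $(2)\Leftrightarrow(3)\Leftrightarrow(4)$, Proposition~2.2 for the passage to $\overline H$ in $(5)$).
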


Given a monomial $v\in \mathbb F[x_1,\ldots,x_r]$, the support of $v$ is the set $\supp(v)=\{x_j\;:\;x_j|v\}$.
For $1\leq t\leq r-1$, we consider the numbers: 
$$L_t = |\{ \supp(v) \;:\;v\in \mathbb F[H],\;|\supp(v)|=t\}| \text{ and } N_t=\binom{r}{t}-\binom{r-2}{t-1}+1.$$
\begin{teor}(\cite[Theorem 1.6]{mir})
Except the case $(ii)$, the following are equivalent:
\begin{enumerate}
 \item[(1)] $\mathbb F[H]=\mathbb F[x_1,\ldots,x_r]$.
 \item[(2)] $I_H=(0)$ and there exists $1\leq t\leq r-1$ such that $L_t\geq N_t$.
\end{enumerate}
\end{teor}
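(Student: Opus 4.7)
The plan is to handle separately the two remaining cases after excluding $(ii)$, namely $(i)$ and $(iii)$. In case $(i)$ one has $H=\mathbb N^r$, so $\mathbb F[H]=\mathbb F[x_1,\ldots,x_r]$ and (1) holds tautologically; moreover every subset of $\{x_1,\ldots,x_r\}$ is realized as the support of some monomial of $\mathbb F[H]$, so $L_t=\binom{r}{t}$, and the inequality $\binom{r}{t}\geq \binom{r}{t}-\binom{r-2}{t-1}+1$ is equivalent to $\binom{r-2}{t-1}\geq 1$, which holds throughout $1\leq t\leq r-1$. Thus both (1) and (2) hold in case $(i)$, and the equivalence is trivial there.

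The substance lies in case $(iii)$, where $\ell_j<0$ for some $j\in\{p+1,\ldots,q\}$, so $x_j\notin\mathbb F[H]$ and (1) already fails. It therefore suffices to show that (2) fails as well. If $I_H\neq(0)$, there is nothing to do, so assume $I_H=(0)$. By Theorem~2.3 this forces $p=1$, $\ell_1\mid\ell_j$ for $2\leq j\leq q$, and $\mathbb F[H]=\mathbb F[x_1,x_1^{m_2}x_2,\ldots,x_1^{m_q}x_q,x_{q+1},\ldots,x_r]$ with $m_j=-\ell_j/\ell_1\geq 1$. Consequently $x^{\mathbf a}\in\mathbb F[H]$ if and only if $a_1\geq m_2a_2+\cdots+m_qa_q$.

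The combinatorial heart of the argument is the support count. If some $a_j>0$ with $2\leq j\leq q$, the constraint forces $a_1\geq m_j>0$, so $x_1\in\supp(x^{\mathbf a})$; conversely any subset $T$ with $x_1\in T$ or with $T\cap\{x_2,\ldots,x_q\}=\emptyset$ is plainly realized. The unrealizable size-$t$ subsets are therefore precisely those omitting $x_1$ yet meeting $\{x_2,\ldots,x_q\}$, numbering $\binom{r-1}{t}-\binom{r-q}{t}$. This yields
\[
L_t=\binom{r}{t}-\binom{r-1}{t}+\binom{r-q}{t}=\binom{r-1}{t-1}+\binom{r-q}{t}.
\]
Applying Pascal's identity twice rewrites $N_t=\binom{r-1}{t-1}+\binom{r-2}{t}+1$, so $L_t<N_t$ reduces to $\binom{r-q}{t}\leq\binom{r-2}{t}$, which holds for every $t$ because $q\geq 2$ in case $(iii)$. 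Hence $L_t<N_t$ for all $1\leq t\leq r-1$, so (2) fails.

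The main obstacle is pinpointing the dichotomy ``$x_1\in T$ or $T\cap\{x_2,\ldots,x_q\}=\emptyset$'' that characterizes the realizable supports, and then organizing the double use of Pascal's identity so that the comparison $L_t<N_t$ distills to the clean inequality $r-q\leq r-2$, which is precisely the defining feature $q\geq 2$ of case $(iii)$.
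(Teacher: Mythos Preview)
Your proof is correct and follows essentially the same route as the paper. The paper itself omits the argument, citing \cite[Theorem~1.6]{mir}; the residual proof fragment left in the source after \texttt{\textbackslash end\{document\}} indicates the intended shape: declare $(1)\Rightarrow(2)$ trivial, and for $(2)\Rightarrow(1)$ in case $(iii)$ invoke the structure theorem for $I_H=(0)$ (the preceding theorem, giving $p=1$ and $\mathbb F[H]=\mathbb F[x_1,x_1^{m_2}x_2,\ldots,x_1^{m_q}x_q,x_{q+1},\ldots,x_r]$) to force $L_t<N_t$, with the actual inequality deferred to \cite{mir}. Your argument supplies exactly this missing computation --- the characterization of realizable supports and the Pascal-identity reduction to $\binom{r-q}{t}\le\binom{r-2}{t}$ --- so it is more self-contained than what the paper records, but not a different method. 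Your reorganization as ``both conditions hold in case $(i)$, both fail in case $(iii)$'' rather than two separate implications is purely cosmetic.
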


\begin{teor}(\cite[Theorem 1.13, Proposition 2.3]{mir})
In the case (iii), if 
$\ell_1=\cdots=\ell_p=1,\; \ell_{p+1}=\cdots=\ell_q=-1 \text{ and } \ell_{q+1}=\cdots=\ell_r =0$,
then we have:
\begin{enumerate}
 \item[(1)] $\mathbb F[H]=\mathbb F[x_1,\ldots,x_p,x_{q+1},\ldots,x_r, x_jx_k \;:\; 1\leq j\leq p,\;p+1\leq k\leq q]$.
 \item[(2)] $\mathbb F[\overline{H}] = \mathbb F[x_1,\ldots,x_{p},x_{q+1}^{\pm 1},\ldots,x_r^{\pm 1}, (x_{j}x_k)^{\pm 1} : 1\leq j\leq p,\;p+1 \leq k\leq q].$
 \item[(3)] Letting $\Phi:\mathbb F[t_1,\ldots,t_p,t_{q+1},\ldots,t_r,t_{jk}\;:\;1\leq j\leq p,\;p+1\leq k\leq q]\to \mathbb F[H]$, $\Phi(t_j)=x_j$, $\Phi(t_{jk})=x_jx_k$,
       we have: $$I_H=\Ker(\Phi)=(t_jt_{ik}-t_it_{jk}, t_{jk}t_{im} - t_{jm}t_{ik}\;:\; 1\leq j,i\leq p,\;p+1\leq k,m\leq q).$$
\end{enumerate}
\end{teor}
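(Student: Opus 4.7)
The plan is to treat parts (1) and (2) by an explicit combinatorial decomposition of monomials, and part (3) by the standard toric-ideal reduction argument modulo the two listed families of binomials.

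For (1), given any $\mathbf a\in H$, i.e.\ $a_1+\cdots+a_p\geq a_{p+1}+\cdots+a_q$ with all entries in $\mathbb N$, I would set $N:=a_{p+1}+\cdots+a_q$ and pick nonnegative integers $c_1,\ldots,c_p$ with $c_j\leq a_j$ and $\sum_j c_j=N$. Matching the multiset in which $j$ occurs $c_j$ times against the multiset in which $k$ occurs $a_k$ times yields pairs $(j_1,k_1),\ldots,(j_N,k_N)$ and the factorisation
$$x^{\mathbf a}=\prod_{i=1}^{N}(x_{j_i}x_{k_i})\cdot\prod_{j=1}^{p}x_j^{\,a_j-c_j}\cdot\prod_{m=q+1}^{r}x_m^{a_m},$$
which exhibits $x^{\mathbf a}$ as a product of the listed generators. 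For (2), the same bookkeeping works with $\mathbf a\in\mathbb Z^r$ and the additional freedom of $(x_jx_k)^{-1}$ and $x_m^{\pm 1}$: I would look for integers $n_{jk}$ and $e_j\geq 0$ with $e_j+\sum_k n_{jk}=a_j$ for $j\leq p$ and $\sum_j n_{jk}=a_k$ for $p+1\leq k\leq q$. Summing gives $\sum_j a_j\geq\sum_k a_k$, which is precisely the defining inequality of $\overline H$; a concrete realisation (using $p\geq 1$ from case (iii)) is to set $n_{j,p+1}=a_j$ and $n_{j,k}=0$ for all $j\geq 2$ and $k>p+1$, and let the first row absorb the balance.

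For (3), the inclusion $\supseteq$ is immediate from $\Phi(t_jt_{ik})=x_ix_jx_k=\Phi(t_it_{jk})$ and $\Phi(t_{jk}t_{im})=x_ix_jx_kx_m=\Phi(t_{jm}t_{ik})$. For the converse, I would use the standard fact that $I_H$ is generated by binomials $t^{\mathbf u}-t^{\mathbf v}$ with $\Phi(t^{\mathbf u})=\Phi(t^{\mathbf v})$, and show that each such binomial lies in the ideal $J$ generated by the two families. The reduction proceeds in two steps: the first-type relations let me replace a factor $t_jt_{ik}$ by $t_it_{jk}$ inside any monomial, renormalising which row index each $t_{jk}$ carries; the second-type relations are the $2\times 2$ Segre--Pl\"ucker minors of the matrix $(t_{jk})$, which normalise the pattern within that matrix. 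Choosing a term order that favours the $t_j$'s and $t_m$'s over the $t_{jk}$'s and performing Buchberger-type reductions brings every monomial in the fibre $\Phi^{-1}(x^{\mathbf a})$ to a unique normal form, giving $I_H=J$.

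The main obstacle will be part (3): showing that the two families actually generate the whole toric ideal rather than merely lying in it. The underlying combinatorics is essentially that of the $2\times 2$ minors of a generic $p\times(q-p)$ matrix, augmented by ``row-promotion'' relations coming from the $t_j$'s, and fits into the well-studied Segre-variety picture; but a self-contained verification still requires either a careful Buchberger argument or an inductive combinatorial proof that the normal form is uniquely determined. Parts (1) and (2) are, by contrast, mostly bookkeeping once the pairing scheme is set up.
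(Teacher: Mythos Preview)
The paper does not give its own proof of this theorem: it is stated as a direct import of \cite[Theorem 1.13, Proposition 2.3]{mir}, and the introduction explicitly says ``Theorem 2.3, Theorem 2.4 and Theorem 2.5 are simple generalizations of the main results from \cite{mir}, hence we omit the proofs.'' So there is nothing in the paper to compare your argument against line by line; one can only ask whether your outline would succeed as an independent proof.

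For (1) and (2) your scheme is correct and essentially complete. The pairing argument in (1) is exactly the right combinatorics, and your explicit choice of the $n_{jk}$ in (2) (pushing everything into the first row and checking $e_1=\sum_{j\le p}a_j-\sum_{p<k\le q}a_k\ge 0$) works; you might add the one-line remark that each listed generator visibly lies in $\mathbb F[H]$ (resp.\ $\mathbb F[\overline H]$), which gives the reverse inclusion.

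For (3) your diagnosis is accurate: the inclusion $J\subseteq I_H$ is trivial, and the real content is the opposite inclusion. Your identification of the problem with the $2\times 2$ minors of a generic $p\times(q-p)$ matrix (the Segre ideal) together with the ``row-promotion'' binomials $t_jt_{ik}-t_it_{jk}$ is the right structural picture, and a normal-form/Buchberger argument along the lines you sketch does go through. As you yourself note, however, what you have written is a plan rather than a proof: you have not specified a term order, not exhibited the normal form, and not verified confluence (or, equivalently, that all S-pairs reduce to zero). So (3) remains a genuine gap in your write-up, even though the route you propose is the standard one and would close it with a page or two of careful bookkeeping.
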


\section{Main results}

We denote $\mathcal O$ the domain of entire functions. We have that % $\mathcal O$ can be seen as the subdomain in $\mathcal O_0$, more precisely
$$\mathcal O = \{f(z)=\sum_{n=0}^{+\infty}a_nz^n\;:\;a_n\in\mathbb C,\; \lim_n \sqrt[n]{|a_n|}=0\} \subset \mathcal O_0 \cong \mathbb C\{z\}.$$
Let $f\in \mathcal O$. If there exist a positive number $\rho$ and
constants $A,B > 0$ such that
\begin{equation}\label{ord}
|f(z)|\leq Ae^{B|z|^{\rho}} \text{ for all }z\in\mathbb C, 
\end{equation}
then we say that f has an \emph{order of growth} $\leq\rho$. We define the \emph{order of growth} of $f$ as
$$\rho(f)=\inf\{\rho>0\;:\;f \text{ has an order of growth }\leq\rho \}.$$
For each integer $k \geq 0$ we define canonical factors by
$$E_0(z)=1-z \text{ and } E_k(z)=(1-z)e^{z+\frac{z^2}{2}+\cdots+\frac{z^{k}}{k}}\text{ for }k\geq 1.$$
Let $f\in\mathcal O$ be an entire function with the order of growth $\rho$. From Hadamard's Theorem (see for instance \cite[Theorem 5.1]{stein}), it follows that
\begin{equation}\label{2_2}
 f(z)=z^me^{P(z)}\prod_{n=1}^{\infty}E_k(\frac{z}{z_n}),
\end{equation}
where $k=\lfloor \rho \rfloor$, $z_1,z_2,\ldots$ are the non-zero zeros of $f$, $P$ is a polynomial of degree $\leq k$ and $m$ is 
the order of the zero of $f$ at $z=0$. In particular, if the number of zeros of $f$ is finite, then 
\begin{equation}\label{2_3}
 \rho=k \text{ and }f(z)=Q(z)e^{P(z)}, \text{ where }Q\in \mathbb C[z].
\end{equation}
It is well known that the field of meromorphic functions on $\mathbb C$, denoted by $\mathcal M$ is the quotient field of $\mathcal O$.
Moreover, if $f$ is meromorphic with order of growth $\leq \rho$, then $f$ is the quotient of two holomorphic functions with order of growth $\leq \rho$.
For any $\rho>0$, we denote $\mathcal O_{<\rho}$ the domain of entire functions with order of growth $<\rho$, and
% It is easy to check that $\mathcal O_{<\rho}$ is a $\mathbb C$-subalgebra of the algebra of entire functions $\mathcal O$. 
 $\mathcal M_{<\rho}$ the quotient field of $\mathcal O_{<\rho}$, that is the field of meromorphic functions of order $<\rho$.

\begin{prop}
 If $f$ is a meromorphic function with order of growth $\rho$ with finitely many zeros and poles, then $\rho$ is an integer and $f(z)=R(z)e^{P(z)}$,
where $R(z)\in\mathbb C(z)$ is a rational function, and $P\in \mathbb C[z]$ is a polynomial of degree $\rho$.
\end{prop}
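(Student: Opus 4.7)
The plan is to reduce the problem to the entire case, where equation \eqref{2_3} (a consequence of Hadamard's theorem) already does the work. The idea is to multiply $f$ by a polynomial that clears the poles, apply \eqref{2_3} to the resulting entire function, and then express $f$ back as a rational function times an exponential.

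First, let $w_1,\ldots,w_s$ be the poles of $f$ with multiplicities $n_1,\ldots,n_s$, and set
\[
 Q(z):=\prod_{j=1}^{s}(z-w_j)^{n_j}\in\mathbb C[z].
\]
Then $g:=fQ$ is entire, and its zeros are exactly the zeros of $f$ (together with possible cancellations at the $w_j$'s in case $f$ has zeros there, but in any case only finitely many).

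The key observation is that $g$ has order of growth exactly $\rho$. Indeed, since $f$ is meromorphic of order $\rho$, we may write $f=u/v$ with $u,v\in\mathcal O$ of order $\leq\rho$; then $g=uQ/v$ is a quotient of entire functions of order $\leq\rho$, but since $g$ is entire, its order as an entire function is $\leq\rho$. Conversely, $f=g/Q$ and $Q$ is a polynomial (hence of order $0$), so $\rho(f)\leq\rho(g)$. Thus $\rho(g)=\rho$.

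Since $g$ is entire of order $\rho$ with only finitely many zeros, \eqref{2_3} applies: $\rho$ is a nonnegative integer equal to $k=\lfloor\rho\rfloor$, and
\[
 g(z)=Q_1(z)e^{P(z)},\qquad Q_1\in\mathbb C[z],\ P\in\mathbb C[z],\ \deg P\leq \rho.
\]
Moreover, $\rho(Q_1 e^{P})=\deg P$ since $Q_1$ is polynomial, so $\deg P=\rho$. Setting $R(z):=Q_1(z)/Q(z)\in\mathbb C(z)$ we obtain
\[
 f(z)=\frac{g(z)}{Q(z)}=R(z)e^{P(z)},
\]
which is the desired form. No real obstacle is anticipated; the only point requiring a few words is the invariance of the order under multiplication by the polynomial $Q$, which follows at once from the quotient definition of the order of a meromorphic function recalled just before the statement.
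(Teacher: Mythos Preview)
Your proof is correct. Both you and the paper reduce to the entire case via \eqref{2_3}, but along slightly different paths. The paper writes $f=g/h$ with $g,h$ entire of order $\leq\rho$ (tacitly chosen without common zeros, so that each inherits finitely many zeros from $f$), applies \eqref{2_3} to $g$ and to $h$ separately, and gets $f=\dfrac{Q_1}{Q_2}\,e^{P_1-P_2}$ with $\deg(P_1-P_2)=\rho$. You instead clear the poles with a single polynomial $Q$, produce one entire function $g=fQ$ of order exactly $\rho$, and invoke \eqref{2_3} once. Your route is a bit more economical---one use of Hadamard and no need to argue that numerator and denominator each have finitely many zeros---while the paper's route leans directly on the quotient description of meromorphic functions of finite order recalled just before the proposition. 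The only place where your write-up could be tightened is the line ``$g=uQ/v$ is a quotient of entire functions of order $\leq\rho$, but since $g$ is entire, its order as an entire function is $\leq\rho$'': it is cleaner (and avoids any appeal to the equivalence of the meromorphic and entire notions of order) simply to note that multiplying by a polynomial does not change the order, so $\rho(g)=\rho(fQ)=\rho(f)=\rho$ directly.
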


\begin{proof}
Since $f$ is meromorphic of order $\rho$, we can write $f(z)=\frac{g(z)}{h(z)}$, where $g$ and $h$ are holomorphic of order $\leq \rho$
and at least one of then has the order of growth $\rho$. From \eqref{2_3} it follows that $\rho$ is integer and
$$g(z)=Q_1(z)e^{P_1(z)} \text{ and } h(z)=Q_2(z)e^{P_2(z)},$$
where $P_1,Q_1,P_2$ and $Q_2$ are polynomials with $\max\{\deg(P_1),\deg(P_2)\}=\rho$. Therefore
$$f(z)=\frac{Q_1(z)}{Q_2(z)}e^{P_1(z)-P_2(z)}.$$
Since $f$ has the order of growth $\rho$, it follows that $\deg(P_1-P_2)=\rho$, as required.
\end{proof}

\begin{obs}
Let $f_1,\ldots,f_r$ be some meromorphic functions with finite orders of growth $\rho_1,\ldots,\rho_r$ and 
finitely many zeros and poles. From Proposition $3.1$ it follows that
$$f_j(z)=R_j(z)e^{P_j(z)} \text{ for }1\leq j\leq r,$$
where $R_j\in\mathbb C(z)$ and $P_j\in \mathbb C[z]$ for $1\leq j\leq r$. We have the $\mathbb C(z)$-algebra isomorphisms
\begin{align*}
& \mathbb C(z)[f_1,\ldots,f_r] \cong \mathbb C(z)[e^{P_1(z)},\ldots,e^{P_r(z)}], \\
& \mathbb C(z)[f_1^{\pm 1},\ldots,f_r^{\pm 1}] \cong \mathbb C(z)[e^{\pm P_1(z)},\ldots,e^{\pm P_r(z)}].
\end{align*}
Since $\mathbb C(z)$ is a subfield of $\mathcal M_{<1}$, it follws that we have the $\mathcal M_{<1}$-algebra isomorphism
\begin{align*}
& S:=\mathcal M_{<1}[f_1,\ldots,f_r] \cong \mathcal M_{<1}[e^{P_1(z)},\ldots,e^{P_r(z)}], \\
& \overline S:=\mathcal M_{<1}[f_1^{\pm 1},\ldots,f_r^{\pm 1}] \cong \mathcal M_{<1}[e^{\pm P_1(z)},\ldots,e^{\pm P_r(z)}].
\end{align*}
However, in general $S^{hol} \subsetneq (\mathcal M_{<1}[e^{P_1(z)},\ldots,e^{P_r(z)}])^{hol}=\mathcal M_{<1}[e^{P_1(z)},\ldots,e^{P_r(z)}]$,
as the functions $f_1,\ldots,f_r$ could have poles at $z=0$.
\end{obs}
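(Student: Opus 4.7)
The plan is to derive both isomorphism statements and the final strict inclusion directly from Proposition 3.1, using only the elementary fact that $\mathbb C(z)\subset \mathcal M_{<1}$ (which holds because polynomials are entire of order $0$, so $\mathbb C[z]\subset\mathcal O_{<1}$ and its quotient field $\mathbb C(z)$ lies in the quotient field $\mathcal M_{<1}$). First I would note that the asserted algebra isomorphisms are in fact \emph{equalities} of subrings of $\mathcal M$. Proposition 3.1 gives $f_j = R_j e^{P_j}$ with $R_j\in\mathbb C(z)^{\times}$ and $P_j\in\mathbb C[z]$, so the two identities
$$e^{P_j} = R_j^{-1}\, f_j, \qquad f_j = R_j\, e^{P_j}$$
yield $e^{P_j}\in \mathbb C(z)[f_1,\dots,f_r]$ and $f_j\in \mathbb C(z)[e^{P_1},\dots,e^{P_r}]$, so mutual containment forces equality. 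For the Laurent version one adds $e^{-P_j}=R_j f_j^{-1}$ and $f_j^{-1}=R_j^{-1}e^{-P_j}$ and argues identically. Enlarging the coefficient field from $\mathbb C(z)$ to $\mathcal M_{<1}$ preserves both containments verbatim, which gives the two $\mathcal M_{<1}$-algebra isomorphisms.

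Next I would establish the equality $(\mathcal M_{<1}[e^{P_1},\dots,e^{P_r}])^{hol} = \mathcal M_{<1}[e^{P_1},\dots,e^{P_r}]$. Each $e^{P_j}$ is entire and nowhere zero, so $\ord_{z=0} e^{P_j} = 0$. For any $\mathbf a\in\mathbb N^r$, the monomial $e^{a_1 P_1+\cdots+a_r P_r}$ is likewise entire and nonvanishing at $0$, hence holomorphic with $\ord_{z=0} = 0$. Therefore every generating monomial meets the holomorphy condition defining $(\cdot)^{hol}$, and the restriction is vacuous.

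Finally, for the strict inclusion $S^{hol}\subsetneq \mathcal M_{<1}[e^{P_1},\dots,e^{P_r}]$, I would argue by a representative scenario: suppose some $R_j$ has a pole at $0$, equivalently $\ord_{z=0} f_j < 0$. Then the unit vector $\mathbf e_j$ is not in $H$, so $f_j$ itself is not among the holomorphic generating monomials of $S^{hol}$; meanwhile $e^{P_j}=R_j^{-1}f_j$ is entire and hence lies in the right-hand algebra. The delicate point, and the main obstacle, is to show that $e^{P_j}$ cannot then be rewritten as an $\mathcal M_{<1}$-linear combination of holomorphic monomials $f^{\mathbf b}$ with $\mathbf b\in H$. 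This reduces to a question of algebraic independence of $f_1,\dots,f_r$ over $\mathcal M_{<1}$ (established in the setting of Theorem 3.3 below): under such independence the monomials $f^{\mathbf b}$ are $\mathcal M_{<1}$-linearly independent, so the expansion $e^{P_j} = R_j^{-1} f_j$ is unique, and since $f_j \notin S^{hol}$, it follows that $e^{P_j}\notin S^{hol}$. An explicit instance, of the form provided by Example 3.6, then certifies that strictness does occur, justifying the qualifier ``in general''.
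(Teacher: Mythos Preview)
The paper treats this as a Remark and gives no formal proof; the content is presented as an immediate consequence of Proposition~3.1, with the last sentence (``as the functions $f_1,\ldots,f_r$ could have poles at $z=0$'') serving as the entire justification for the strict inclusion. Your write-up is correct and in fact more thorough than the paper: you make explicit that the ``isomorphisms'' are literal equalities of subalgebras of $\mathcal M$ (via the unit $R_j\in\mathbb C(z)^\times$), and you observe that $\ord_{z=0}e^{P_j}=0$ forces the $(\cdot)^{hol}$ construction on the right to be vacuous.

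Where you go beyond the paper is in the strict inclusion. The phrase ``in general'' only claims that strictness \emph{can} occur, so a single example suffices; the paper implicitly relies on Example~3.6 for this. You instead try to argue abstractly that $e^{P_j}\notin S^{hol}$ whenever $\ord_{z=0}f_j<0$, invoking linear independence of the monomials $f^{\mathbf b}$ over $\mathcal M_{<1}$. That step is valid only under the extra hypotheses of Theorem~3.3(2) (pairwise distinct $\deg P_j\geq 1$), which the remark does not assume; so your abstract argument does not cover the general statement, but your fallback to ``an explicit instance, of the form provided by Example~3.6'' is exactly what is needed and matches the paper's intent.
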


In the following theorem, we give a criterion for the linear (algebraic) independence of the functions $f_j(z)=e^{P_j(z)}$, $1\leq j\leq r$,
over the field $\mathcal M_{<1}$.

\begin{teor}
Let $P_1,P_2,\ldots,P_r\in \mathbb C[z]$ be polynomials such that $P_j-P_k$ is non-constant
for any $j\neq k$. Let $d_j:=\deg(P_j)$ for $1 \leq j\leq r$ and 
$$f_j:\mathbb C \to \mathbb C^*, \; f_j(z):=e^{P_j(z)}, \;(\forall) z\in \mathbb C,\; 1\leq j\leq r.$$ 
\begin{enumerate}
 \item[(1)] The holomorphic functions $f_1,\ldots,f_r$ are linearly independent over $\mathcal M_{<1}$. 
 \item[(2)] If $d_j\geq 1$ for all $1\leq j\leq r$ and $|\{d_1,\ldots,d_r\}|=r$ then $f_1,\ldots,f_r$ are algebraically independent over $\mathcal M_{<1}$.
\end{enumerate}
\end{teor}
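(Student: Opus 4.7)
I will prove (1) by the classical differentiation / minimal-length trick for exponential sums, and then reduce (2) to (1) by expanding any algebraic relation among the $f_j$'s as a linear relation among exponentials $e^{\sum_i\alpha_iP_i}$ and checking that the new exponents differ pairwise by non-constant polynomials.

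\textbf{For (1),} I argue by contradiction. Assume there is a nontrivial $\mathcal M_{<1}$-linear relation among $f_1,\ldots,f_r$; among all such, pick one of minimal length, say (after reindexing) $\sum_{j=1}^{r'} g_j e^{P_j}=0$ with $g_j\in\mathcal M_{<1}^{*}$ and every relation involving strictly fewer of the $e^{P_i}$'s trivial. The case $r'=1$ is immediate since $e^{P_1}$ is nowhere zero. For $r'\geq 2$, set $h_j:=-g_j/g_{r'}\in\mathcal M_{<1}^{*}$ so that $e^{P_{r'}}=\sum_{j<r'} h_j e^{P_j}$; differentiating (using that $\mathcal M_{<1}$ is closed under $d/dz$) and resubstituting this identity for $P_{r'}'e^{P_{r'}}$ produces
\[
\sum_{j<r'}\bigl(h_j'+h_j(P_j'-P_{r'}')\bigr)e^{P_j}=0,
\]
a relation of length $r'-1$. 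Minimality then forces every coefficient to vanish, i.e.\ $h_j'/h_j=P_{r'}'-P_j'\in\mathbb C[z]$. Since the logarithmic derivative $h_j'/h_j$ has simple poles precisely at the zeros and poles of $h_j$, while the right-hand side is entire, $h_j$ must be nowhere-vanishing entire; writing $h_j=e^{\varphi_j}$ with $\varphi_j$ entire and matching $\varphi_j'=P_{r'}'-P_j'$ gives $h_j=C_je^{P_{r'}-P_j}$ with $C_j\in\mathbb C^{*}$. Then $e^{P_{r'}-P_j}=h_j/C_j\in\mathcal M_{<1}$, which contradicts the fact that $e^{P_{r'}-P_j}$ has order of growth $\deg(P_{r'}-P_j)\geq 1$ (as $P_{r'}-P_j$ is non-constant).

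\textbf{For (2),} I reindex so that $d_1<d_2<\cdots<d_r$, which is possible by pairwise distinctness of the $d_j$. Assume $F=\sum_{\alpha\in A}c_\alpha x^\alpha\in\mathcal M_{<1}[x_1,\ldots,x_r]$ is a hypothetical nonzero polynomial vanishing on $(f_1,\ldots,f_r)$. Setting $Q_\alpha:=\alpha_1P_1+\cdots+\alpha_rP_r$, the relation becomes $\sum_{\alpha\in A}c_\alpha e^{Q_\alpha}=0$. For distinct $\alpha,\beta\in A$ with $k:=\max\{i:\alpha_i\neq\beta_i\}$, the polynomial $Q_\alpha-Q_\beta$ has leading term $(\alpha_k-\beta_k)P_k$ of degree $d_k\geq 1$, so it is non-constant. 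Hence $\{e^{Q_\alpha}\}_{\alpha\in A}$ satisfies the hypothesis of (1), and part (1) forces all $c_\alpha=0$, contradicting $F\neq 0$.

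\textbf{The main obstacle} is the passage from $h_j'/h_j\in\mathbb C[z]$ to the explicit form $h_j=C_je^{P_{r'}-P_j}$, which rests on two complex-analytic facts: a meromorphic function whose logarithmic derivative has no poles must be nowhere-vanishing entire, and a nowhere-vanishing entire function admits an entire logarithm. A smaller but necessary check is that $\mathcal M_{<1}$ is stable under $d/dz$: for $g=p/q$ with $p,q$ entire of order $<1$, Cauchy-type estimates show that $p',q'$ remain of order $<1$, whence $g'=(p'q-pq')/q^2\in\mathcal M_{<1}$.
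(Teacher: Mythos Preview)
Your proof is correct and essentially the same as the paper's: both establish (1) via the differentiation trick (the paper phrases it as induction on $r$ and divides through by $e^{P_r}$ before differentiating, whereas you use a minimal-length counterexample and resubstitute afterwards, but the core step---forcing $h_j'/h_j=P_{r'}'-P_j'$ and then contradicting $h_j\in\mathcal M_{<1}$ via the order of $e^{P_{r'}-P_j}$---is identical), and both reduce (2) to (1) by checking that distinct multi-indices $\mathbf a\neq\mathbf b$ yield exponents $\sum_i a_iP_i$ and $\sum_i b_iP_i$ differing by a non-constant polynomial. Your write-up is in fact more careful than the paper's in justifying closure of $\mathcal M_{<1}$ under $d/dz$ and in spelling out why the ODE forces $h_j=C_je^{P_{r'}-P_j}$.
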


\begin{proof}
(1) Note that $f_j$ is an entire functions of order $d_j:=\deg(P_j)$, for any $1\leq j\leq r$.
We use induction on $r\geq 1$. The case $r=1$ is obvious. Assume $r\geq 2$ and
let $g_1,\ldots,g_r\in\mathcal M_{<1}$ such that $$g_1f_1+g_2f_2+\cdots+g_rf_r=0.$$ 
If $g_r=0$, then we are done by induction hypothesis. Without any loss of generality, we can assume that $g_r$ is identically $1$.
It follows that
\begin{equation}\label{22}
 g_1(z) e^{P_1(z)-P_r(z)} + \cdots + g_{r-1}(z) e^{P_{r-1}(z)-P_{r}(z)} + 1 = 0,\text{ for all }z\in\mathbb C.
\end{equation}
Differentiating \eqref{22} it follows that 
\begin{eqnarray*}
& (g'_1(z)+g_1(z)(P_1(z)-P_r(z))) e^{P_1(z)-P_r(z)} + \cdots  \\
& + (g'_{r-1}(z)+g_{r-1}(z)(P_{r-1}(z)-P_r(z)))e^{P_{r-1}(z)-P_{r}(z)}= 0,\text{ for all }z\in\mathbb C.
\end{eqnarray*}
Since $(P_j-P_r)-(P_k-P_r)=P_j-P_k$ are non-constant for all $1\leq j\neq k\leq r-1$, 
by induction hypothesis, it follows that
$g'_j+(P_j-P_r)g_j = 0$, for all $1\leq j\leq r-1$, hence
\begin{equation}\label{23}
 g_j(z) = C_je^{P_r-P_j}(z),\text{ where }C_j\in\mathbb C,\;1\leq j\leq r.
\end{equation}
If $C_j\neq 0$, since $\deg(P_r-P_j)\geq 1$, from \eqref{23} it follows that $g_j$ is a holomorphic function of order $\geq 1$, 
a contradiction. Hence $g_j=0$ for all $1\leq j\leq r-1$ and thus we get a contradiction from \eqref{22}.

(2) Let $Q\in \mathbb F[t_1,\ldots,t_r]$ be a polynomial such that $Q(f_1,\ldots,f_r)=0$. We have that
     $$Q(t_1,\ldots,t_n)=\sum_{\mathbf a\in \mathbb N^r}g_{\mathbf a}t^{\mathbf a}, \text{ where }t^{\mathbf a}:=t_1^{a_1}\cdots t_r^{a_r},\;g_{\mathbf a}\in \mathbb F,$$
     and only a finite number of $g_{\mathbf a}$'s are nonzero. Hence 
     \begin{equation}\label{24}
       Q(f_1,\ldots,f_n)(z)=\sum_{\mathbf a\in \mathbb N^r}g_{\mathbf a}(z)f^{\mathbf a}(z),\text{ where } f^{\mathbf a}:=f_1^{a_1}\cdots f_r^{a_r}.
     \end{equation}
     For any $\mathbf a\in\mathbb N^r$, we have
     \begin{equation}\label{25}
       f^{\mathbf a}(z) = e^{P_{\mathbf a}(z)},\text{ where }P_{\mathbf a}:=a_1P_1+\cdots+a_rP_r.
     \end{equation}
     Let $\mathbf b\neq \mathbf a \in \mathbb N^r$. Since the $d_j$'s are pairwise disjoint, the polynomial
     \begin{equation}\label{26}
       P_{\mathbf a} - P_{\mathbf b} = (a_1-b_1)P_1+\cdots+(a_r-b_r)P_r,
     \end{equation}
     in non-constant. From  \eqref{25}, \eqref{26} and (i) it follows that the set 
     $\{f^{\mathbf a}\;:\;\mathbf a\in \mathbb N^r\}$ is linearly independent over $\mathbb F$. Hence, from \eqref{24}, 
     we get $Q=0$, as required.
\end{proof}

\begin{cor}
Let $g_{jk}\in \mathcal M_{<1}$ for all $1\leq j,k\leq r$ such that
$$D(z):=\det(g_{jk}(z))_{j,k}\neq 0, (\forall)z\in A,$$
where $A\subset \mathbb C$ is a non-discrete subset. In the hypothesis of Theorem $3.3$, the 
meromorphic functions $h_j:=\sum_{k=1}^r g_{jk}f_k$, $1\leq j\leq k$, are linearly independent over $\mathcal M_{<1}$.

Moreover, in the hypothesis $(2)$ of Theorem $3.3$, the functions $h_1,\ldots,h_r$ are 
algebraically independent over $\mathcal M_{<1}$.
\end{cor}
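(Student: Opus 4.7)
The plan is to reduce both parts to Theorem~$3.3$ via the invertible linear substitution given by the matrix $G:=(g_{jk})_{j,k}$. The key preliminary observation is that, although the hypothesis $D(z)\neq 0$ on $A$ is stated pointwise, a nonzero meromorphic function on $\mathbb C$ has only a discrete zero set, so $D\not\equiv 0$ in $\mathcal M_{<1}$. Since $\mathcal M_{<1}$ is a field, this means $G$ is invertible in $GL_r(\mathcal M_{<1})$.

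For part $(1)$, I would start from a putative relation $c_1h_1+\cdots+c_rh_r=0$ with $c_j\in\mathcal M_{<1}$. Substituting $h_j=\sum_k g_{jk}f_k$ and collecting the coefficient of each $f_k$ yields
\[\sum_{k=1}^r\left(\sum_{j=1}^r c_jg_{jk}\right)f_k=0.\]
By Theorem~$3.3(1)$, applied with coefficient field $\mathcal M_{<1}$, each inner sum must vanish, i.e. $G^{T}\mathbf c=0$ for $\mathbf c=(c_1,\ldots,c_r)^{T}$; invertibility of $G$ over $\mathcal M_{<1}$ then forces $\mathbf c=0$.

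For part $(2)$, I would introduce the $\mathcal M_{<1}$-algebra endomorphism
\[\Psi:\mathcal M_{<1}[t_1,\ldots,t_r]\to\mathcal M_{<1}[t_1,\ldots,t_r],\qquad \Psi(t_j):=\sum_{k=1}^r g_{jk}t_k,\]
which is an automorphism because the analogous substitution with $G^{-1}$ provides a two-sided inverse. Given $Q\in\mathcal M_{<1}[t_1,\ldots,t_r]$ with $Q(h_1,\ldots,h_r)=0$, set $\widetilde Q:=\Psi(Q)$; then $\widetilde Q(f_1,\ldots,f_r)=Q(h_1,\ldots,h_r)=0$. By Theorem~$3.3(2)$, the $f_j$'s are algebraically independent over $\mathcal M_{<1}$, hence $\widetilde Q=0$, and injectivity of $\Psi$ yields $Q=0$.

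The only conceptual point is upgrading the pointwise non-vanishing of $D$ on $A$ to a unit statement in $\mathcal M_{<1}$; beyond that I foresee no real obstacle, since both parts are formal consequences of the change of variables encoded by $G$ together with Theorem~$3.3$.
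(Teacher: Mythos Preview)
Your proposal is correct and follows essentially the same approach as the paper: both first upgrade the pointwise hypothesis on $D$ to the statement that $D\not\equiv 0$ in $\mathcal M_{<1}$ (hence $G\in GL_r(\mathcal M_{<1})$), and then transfer linear/algebraic independence from the $f_j$'s to the $h_j$'s via this invertible change of coordinates. The only cosmetic difference is that for part~(2) the paper phrases the argument as an $\mathcal M_{<1}$-algebra isomorphism $\mathcal M_{<1}[f_1,\ldots,f_r]\to\mathcal M_{<1}[h_1,\ldots,h_r]$, $f_j\mapsto h_j$, whereas you work with the automorphism $\Psi$ of the polynomial ring; these are the same map under the identification $\mathcal M_{<1}[f_1,\ldots,f_r]\cong\mathcal M_{<1}[t_1,\ldots,t_r]$ given by Theorem~$3.3(2)$.
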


\begin{proof}
As $D(z)$ is non constant on a non-discrete subset $A\subset \mathbb C$, it follows 
that $D \in \mathcal M_{<1}$ is nonzero. Since, from Theorem $3.3$, $\{f_1,\ldots,f_r\}$ are linearly independent over $\mathcal M_{<1}$,
it follows that $\{h_1,\ldots,h_r\}$ are also linearly independent over $\mathcal M_{<1}$.

Since $D$ is nonzero and $f_1,\ldots,f_r$ are algebraically independent over $\mathbb F$, it follows that the map
$$\mathcal M_{<1}[f_1,\ldots,f_r] \to \mathcal M_{<1}[h_1,\ldots,h_r],\;f_j\mapsto h_j,\;1\leq j\leq r,$$
is a $\mathcal M_{<1}$-algebra isomorphism, hence $h_1,\ldots,h_r$ are algebraically independent.
\end{proof}

\begin{cor}
Let $\varphi\in \mathcal M_{<1}$ be a a non-constant function and let $P_1,P_2,\ldots,P_r\in \mathbb C[z]$ 
be non-constant polynomials of degrees $d_1,d_2,\ldots,d_r$ such that $P_j-P_k$ is non-constant for any $j\neq k$. Let $f_j(z)=e^{P_j(z)}$ 
for $1\leq j\leq r$. Then:
\begin{enumerate}
 \item[(1)] $\varphi,f_1,\ldots,f_r$ are linearly independent over $\mathbb C$.
 \item[(2)] If $|\{d_1,\ldots,d_r\}|=r$ then $\varphi,f_1,\ldots,f_r$ are algebraically independent over $\mathbb C$.
\end{enumerate}
Moreover, if $A=(a_{ij})_{0\leq i,j\leq r}$ is a nonsingular matrix with entries in $\mathbb C$, and 
$$g_j = a_{0j}\varphi + a_{1j}f_1 + \cdots + a_{rj}f_j \text{ for }0\leq j\leq r,$$
then the conclusions $(1)$ and $(2)$ holds if we replace $\varphi,f_1,\ldots,f_r$ with $g_0,g_1,\ldots,g_r$.
\end{cor}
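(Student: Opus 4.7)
My plan is to reduce both claims to Theorem $3.3$ by formally appending the zero polynomial $P_0 := 0$ to the list $P_1, \ldots, P_r$. Since $P_0 - P_j = -P_j$ is non-constant (because each $P_j$ is non-constant) and $P_i - P_j$ is non-constant for $1 \leq i \neq j \leq r$ by hypothesis, Theorem $3.3(1)$ applies to $P_0, P_1, \ldots, P_r$ and yields linear independence of $1 = e^{P_0}, f_1, \ldots, f_r$ over $\mathcal{M}_{<1}$---a useful strengthened starting point.

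For part $(1)$, I would take a hypothetical $\mathbb{C}$-linear relation $c_0 \varphi + c_1 f_1 + \cdots + c_r f_r = 0$ and reinterpret it as the $\mathcal{M}_{<1}$-linear relation $(c_0 \varphi) \cdot 1 + c_1 f_1 + \cdots + c_r f_r = 0$ among $1, f_1, \ldots, f_r$. The strengthened independence forces every coefficient to vanish, so $c_1 = \cdots = c_r = 0$ and $c_0 \varphi = 0$; since $\varphi$ is non-constant and hence nonzero, it follows that $c_0 = 0$ as well.

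For part $(2)$, given a relation $Q(\varphi, f_1, \ldots, f_r) = 0$ with $Q = \sum_{k, \mathbf{a}} c_{k, \mathbf{a}} t_0^k t^{\mathbf{a}} \in \mathbb{C}[t_0, \ldots, t_r]$, I would regroup by $\mathbf{a}$ to obtain
$$\sum_{\mathbf{a} \in \mathbb{N}^r} \Big(\sum_k c_{k, \mathbf{a}} \varphi^k\Big) f^{\mathbf{a}} = 0,$$
a linear combination of the monomials $f^{\mathbf{a}}$ with coefficients in $\mathcal{M}_{<1}$. Theorem $3.3(2)$ gives algebraic independence of $f_1, \ldots, f_r$ over $\mathcal{M}_{<1}$, so each coefficient $\sum_k c_{k, \mathbf{a}} \varphi^k$ must vanish. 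The remaining task is to argue that $\varphi$ is transcendental over $\mathbb{C}$: a nontrivial polynomial equation $p(\varphi) = 0$ with $p \in \mathbb{C}[t]$ would force $\varphi$ to take only finitely many values on the connected domain where it is finite, and continuity would then make $\varphi$ constant, contradicting the hypothesis. Hence every $c_{k, \mathbf{a}} = 0$ and $Q = 0$.

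For the \emph{Moreover} statement, I would use the fact that invertibility of $A$ over $\mathbb{C}$ turns the substitution $t_j \mapsto \sum_i a_{ij} t_i$ into an automorphism of $\mathbb{C}[t_0, \ldots, t_r]$, so any $\mathbb{C}$-linear (resp.\ algebraic) relation among $g_0, \ldots, g_r$ pulls back to one among $\varphi, f_1, \ldots, f_r$, and must be trivial by $(1)$ (resp.\ $(2)$). There is no substantive obstacle anywhere in this argument; the only step requiring a moment of care is the transcendence of $\varphi$ over $\mathbb{C}$, which is a standard consequence of the identity theorem.
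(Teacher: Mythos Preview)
Your proof is correct and follows essentially the same strategy as the paper: both parts are reduced to Theorem~3.3, together with the transcendence of $\varphi$ over $\mathbb{C}$, and the \emph{Moreover} clause is handled by the invertible linear change of coordinates.

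The one noteworthy packaging difference is in part~(1). You append $P_0=0$ and invoke Theorem~3.3(1) as a black box to get $1,f_1,\ldots,f_r$ linearly independent over $\mathcal{M}_{<1}$, then absorb $c_0\varphi$ into the coefficient of $1$. The paper instead assumes $a\neq 0$, divides through by $a\varphi$ to obtain a relation of the form $1+g_1f_1+\cdots+g_rf_r=0$ with $g_j\in\mathcal{M}_{<1}$, and then appeals to the \emph{proof} of Theorem~3.3(1) to derive a contradiction. Your version is a bit cleaner since it uses the theorem itself rather than re-entering its argument. For part~(2) and the final assertion, the two proofs are essentially identical (the paper phrases~(2) as ``algebraically independent over $\mathcal{M}_{<1}$, hence over $\mathbb{C}(\varphi)$'' and cites Corollary~3.4 for the last part, which amounts to your automorphism argument).
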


\begin{proof}
(1) We consider a linear combination 
\begin{equation}\label{cucu}
a\varphi+a_1f_1+\cdots+a_rf_r=0,\;a\in\mathbb C\text{ and }a_j\in \mathbb C\text{ for }1\leq j\leq r. 
\end{equation}
If $a=0$, then $a_1=\cdots=a_r=0$ from Theorem $3.3(1)$. Assume $a\neq 0$.
Note that, at most one of the polynomials $P_j$'s is constant. We consider two cases: 

(i) If $d_1=0$ and $d_j\geq 1$ for $2\leq j\leq r$, then $f_1\in \mathbb C$.
       Let $$g_j(z):=\frac{1}{a\varphi(z)+a_1f_1},\text{ for }2\leq j\leq r.$$
       From \eqref{cucu} it follows that $$1+g_2f_2+\cdots+g_rf_r=0.$$
       According to the proof of Theorem $3.3(1)$, this yields a contradiction.
 
(ii) If $d_j\geq 1$ for $1\leq j\leq r$, then we let 
       $$g_j(z):=\frac{1}{a\varphi(z)}\text{ for }1\leq j\leq r.$$
       From \eqref{cucu} it follows that $$1+g_1f_1+\cdots+g_rf_r=0,$$
       which, according to the proof of Theorem $3.3(1)$, yields a contradiction.

(2) From Theorem $3.3(2)$ it follows that $f_1,\ldots,f_r$ are algebraically independent over $\mathcal M_{<0}$
    and hence over $\mathbb C(\varphi)$. On the other hand, the nonconstant function $\varphi$ is algebraically 
    independent over $\mathbb C$. Thus $\varphi,f_1,\ldots,f_r$ are algebraically independent over $\mathbb C$.

The last assertion follows from Corollary $3.4$.
\end{proof}

We conclude our paper with a list of examples.

\begin{exm}\emph{
(1) Let $f_1(z)=z, f_2(z)=\frac{e^{-z}}{z^2}, f_3(z)=e^z$ and $S=\mathbb C[f_1,f_2,f_3]$. According to Corollary $3.5$, 
the meromorphic functions $f_1,f_2,f_3$ are linearly independent over $\mathbb C$. One can easily check that 
$$S:=\mathbb C[f_1,f_2,f_3] \cong \frac{\mathbb C[x_1,x_2,x_3]}{\mathfrak p},\text{ where }\mathfrak p =(x_1^2x_2x_3-1).$$
On the other hand, we have that
\begin{align*}
& \overline S:=\mathbb C[f_1^{\pm 1},f_2^{\pm 1},f_3^{\pm 1}] \cong \frac{\mathbb C[x_1,x_2,x_3,y_1,y_2,y_3]}{\overline{\mathfrak p}},\text{ where } \\
& \overline{\mathfrak p}=(x_1y_1-1,x_2y_2-1,x_3y_3-1,x_1^2x_2x_3-1).
\end{align*}
We consider the semigroups
\begin{align*}
 & H=\{(a_1,a_2,a_3)\in\mathbb N^3\;:\;f_1^{a_1}f_2^{a_2}f_3^{a_3} \text{ is holomorphic at } 0\}.\\
 & \overline H=\{(a_1,a_2,a_3)\in\mathbb Z^3\;:\;f_1^{a_1}f_2^{a_2}f_3^{a_3} \text{ is holomorphic at } 0\}.
\end{align*}
Since $\ell_1=\ord_{z=0}f_1 = 1$, $\ell_2=\ord_{z=0}f_2=-2$ and $\ell_3=\ord_{z=0}f_3=0$, from Theorem $2.2$ it follows that
\begin{align*}
& \mathbb C[H] = \mathbb C[x_1,x_1^2x_2,x_3] \cong \mathbb C[t_1,t_2,t_3],\; I_H=(0) \text{ and }\\
& \mathbb C[\overline H]=\mathbb C[x_1,(x_1^2x_2)^{\pm 1},x_3^{\pm 1}]\cong \mathbb C[t_1,t_2^{\pm 1},t_3^{\pm 1}].
\end{align*}
From \eqref{17} and \eqref{110} it follows that
$$S^{hol} = \mathbb C[z,e^{-z},e^{z}] \cong \frac{\mathbb C[x_1,x_1^2x_2,x_3]}{(x_1^2x_2x_3-1)} \cong \frac{\mathbb C[t_1,t_2,t_3]}{(t_2t_3-1)} \cong \mathbb C[t_1,t_2^{\pm 1}].$$
From \eqref{177} and \eqref{1100} it follows that
$\overline S^{hol} = \mathbb C[z,e^{-z},e^{z}] \cong \mathbb C[t_1,t_2^{\pm 1}]$.}

\emph{
(2) Let $f_1(z)=z, f_2(z)=\sin z, f_3(z)=\frac{e^z}{z},\;f_4(z)=\frac{e^{-z}}{z}$ and $S=\mathbb C[f_1,f_2,f_3,f_4]$. We have
$$S\cong \frac{\mathbb C[x_1,x_2,x_3,x_4]}{(x_1^2x_3x_4-1)}.$$
Since $\ell_1=\ell_2=1$ and $\ell_3=\ell_4-1$, from Theorem $2.3$ it follows that
$$\mathbb C[H] = \mathbb C[x_1,x_2,x_1x_3,x_1x_4,x_2x_3,x_2x_4] \cong \frac{\mathbb C[t_1,t_2,t_{13},t_{14},t_{23},t_{24}]}{(t_1t_{23}-t_2t_{13}, t_1t_{24}-t_2t_{14}, t_{13}t_{24}-t_{14}t_{23})}.$$
From \eqref{110} it follows that \small
$$S^{hol}=\mathbb C[z,\sin z, e^z, e^{-z}, \frac{e^z\sin z}{z},\frac{e^{-z}\sin z}{z}] \cong 
\frac{\mathbb C[t_1,t_2,t_{13},t_{14},t_{23},t_{24}]}{(t_1t_{23}-t_2t_{13}, t_1t_{24}-t_2t_{14}, t_{13}t_{24}-t_{14}t_{23}, t_{13}t_{14}-1)}.$$}
\normalsize

\end{exm}

% By Noether normalization Lemma, there exists $f_1,\ldots,f_r \in S$, $r\leq n$, 
% algebraically independent over $\mathbb F$, such that $S$ is a finite $R=\mathbb F[f_1,\ldots,f_r]$-module, where $r=\dim S$.
% \subsection*{Acknowledgements}
% Place all thanks and grant acknowledgements here.

\vspace{2mm} \noindent {\footnotesize
\begin{minipage}[b]{15cm}
Simion Stoilow Institute of Mathematics\\
Research unit 5,P.O.Box 1-764, Bucharest 014700, Romania and\\
Politehnica University of Bucharest, Faculty of Applied Sciences,\\ 
Department of Mathematical Methods and Models,\\
Bucharest 060042, Romania\\
E-mail: mircea.cimpoeas@imar.ro\\
\end{minipage}}

\end{document}